\theoremstyle{plain}
\newtheorem{theorem}{Theorem}[section]
\newtheorem{lemma}[theorem]{Lemma}
\newtheorem{problem}{Problem}[section]
\newcommand{\Ga}{\ensuremath{\Gamma}}
\begin{document}

\title{Connectivity concerning the last two subconstituents 
\\
of a $Q$-polynomial distance-regular graph}
\author{S.M. Cioab\u{a}\footnote{Department of Mathematical Sciences, University of Delaware, Newark, DE 19716-2553, USA {\tt cioaba@udel.edu}}\,\, J.H. Koolen\footnote{School of Mathematical Sciences, University of Science and Technology of China,
Wen-Tsun Wu Key Laboratory of the Chinese Academy of Sciences, Anhui, 230026, China, {\tt koolen@ustc.edu.cn}}\,\, P. Terwilliger\footnote{Department of Mathematics, University of Wisconsin, 480 Lincoln Drive, Madison, WI 53706-1388, USA, {\tt terwilli@math.wisc.edu}}}
\date{\today}
\maketitle

\begin{abstract}
Let $\Ga$ be a $Q$-polynomial distance-regular graph of diameter $d\geq 3$. Fix a vertex $\gamma$ of $\Ga$ and consider the
subgraph induced on the union of the last two subconstituents of $\Ga$ with respect to $\gamma$. We prove that this subgraph is connected.
\end{abstract}

\section{Introduction}

\hspace{1em} All the graphs considered here will be finite and undirected, with no loops nor multiple edges. We briefly review the key definitions and basic results involving distance-regular graphs. For other notations and definitions, see \cite{BCN,BH,DKT}. Let $\Ga$ be a connected graph with vertex set $X$. For $x,y\in X$, the distance between $x$ and $y$ is denoted by $\partial(x,y)$, and any path between $x$ and $y$ of length $\partial(x,y)$ is called {\em geodesic}. The {\em diameter} $\max_{x,y\in X}\partial(x,y)$ of $\Ga$ is denoted by $d$. For an integer $k\geq 0$, $\Ga$ is said to be {\it regular with valency $k$} whenever each vertex of $\Ga$ is adjacent to exactly $k$ vertices of $\Ga$. The graph $\Ga$ is called {\em distance-regular} 
whenever for all integers $0\leq h,i,j\leq d$ there exists a nonnegative 
integer $p^{h}_{ij}$ such that for all $x,y\in X$ with $\partial(x,y)=h$, 
\begin{equation*}
p^{h}_{ij}=|\{z\in X: \partial(z,x)=i,\partial(z,y)=j\}|.
\end{equation*}
For the rest of this paper we assume that $\Ga$ is distance-regular of diameter $d\geq 2$. Note that $\Ga$ is regular with valency $k=p^0_{11}$; to avoid trivialities we always assume $k\geq 3$. Let $A_0,A_1,\dots,A_d$ denote the distance
matrices of $\Ga$ (see \cite[p.127]{BCN}). Then $A_0,A_1,\dots,A_d$ form a
basis for a commutative semisimple $\mathbb{R}$-algebra $M$ known as the {\it Bose-Mesner algebra} of $\Ga$. The algebra $M$ has a second basis $E_0,E_1,\dots,E_d$ such that 
\begin{align*}
E_iE_j&=\delta_{ij}E_i\quad (0\leq i,j\leq d),\\
I&=E_0+\dots +E_d,\\
E_0&=|X|^{-1}J,
\end{align*}
where $I$ is the identity matrix and $J$ is the all ones matrix (see \cite[Thm 2.6.1]{BCN}). We refer to $E_0,E_1,\dots,E_d$ as the {\em primitive idempotents} of $\Ga$. The primitive idempotent $E_0$ is called {\em trivial}. The ordering
$E_0,E_1,\dots,E_d$ is said to be {\em $Q$-polynomial} whenever
for $0 \leq i \leq d$ there exists a polynomial $q_i$ of 
degree $i$ such that $E_i=q_i(E_1)$ 
(where the matrix multiplication is done entry-wise).
For a primitive idempotent $E$ of $\Gamma$,
we say that $\Gamma$ is {\em $Q$-polynomial with respect to $E$} 
whenever there exists a $Q$-polynomial ordering $E_0,E_1,\dots,E_d$
of the primitive idempotents 
such that $E=E_1$. The  graph $\Gamma$ is called {\em $Q$-polynomial} 
whenever it is $Q$-polynomial with respect to at least one primitive idempotent. 

We now recall the antipodal property. Define a binary relation $\sim$ on $X$ such that for all $x, y \in X$, $x \sim y$
whenever $x=y$ or $\partial(x,y)=d$. The graph $\Gamma$ is called {\em antipodal} whenever $\sim$ is an equivalence relation. 
The graph $\Gamma$ is said to be {\em primitive} whenever $\Gamma$ is not bipartite nor antipodal
(see \cite[Thm 4.2.1]{BCN}).
A long-standing conjecture of Bannai and Ito \cite[p.~312]{BI} states that 
if $\Gamma$ is primitive and $d$ is sufficiently large,
then $\Gamma$ is $Q$-polynomial. 
For more information about the $Q$-polynomial property,
see \cite{BI,BCN} or \cite[Chapter 5]{DKT}. 

For $0\leq i\leq d$ and $\gamma\in X$, let $\Gamma_i(\gamma)$
denote the set of vertices in $\Gamma$ at distance $i$
from $\gamma$. The subgraph induced by $\Gamma_i(\gamma)$ is 
called the {\em $i$-th subconstituent of $\Gamma$ with respect to $\gamma$}. 
Combinatorial and algebraic properties of these subconstituents have been 
studied by several authors (see \cite{CGS,CK,GGHR} for example).
The graph $\Gamma$ is called {\it strongly-regular} whenever $d=2$.
If $\Gamma$ is strongly-regular and primitive,
then with respect to any vertex 
the second subconstituent 
of $\Gamma$
is connected. 
See \cite[p.~126]{BH} for an algebraic proof, and 
\cite{GGHR} for a combinatorial proof. 
Answering a question of Brouwer \cite{Br},
Cioab\u{a} and Koolen \cite{CK} generalized this
result in the following way.
Consider the dual eigenvalue sequence
$\theta^*_0, \theta^*_1,\ldots, \theta^*_d$ for the second largest eigenvalue of $\Gamma$
(see equation \eqref{dualeig} for a definition).
By \cite[Ch. 4]{BCN} there exists a unique integer
$s$ $(1 \leq s \leq d)$
such that $\theta^{*}_{s-1}>0$ and $\theta^{*}_s\leq 0$.
Then for any vertex $\gamma$ of $\Gamma$
the subgraph induced on $\cup_{i=s}^d\Gamma_i(\gamma)$ is connected
\cite{CK}.
In \cite{CK} the authors also prove that $s\geq d/2$ and
pose the following problem. 
\begin{problem}[Cioab\u{a}-Koolen \cite{CK}]
Assume that $\Gamma$ is primitive and $d\geq 3$. Is it true that for 
any vertex $\gamma$, the subgraph induced on $\Gamma_{d-1}(\gamma)\cup \Gamma_{d}(\gamma)$ is connected ?
\end{problem}
In \cite{CK}, this was shown to be true if $d\in \{3,4\}$. 
In this note, we show that it is true for all $d\geq 3$, provided 
that  $\Gamma$ is $Q$-polynomial. We now state our main result. 
\begin{theorem}\label{main}
Let $\Gamma$ be a $Q$-polynomial distance-regular graph of diameter 
$d\geq 3$. Then for any vertex $\gamma$ of $\Gamma$ 
the subgraph induced on $\Gamma_{d-1}(\gamma)\cup \Gamma_d(\gamma)$ 
is connected.
\end{theorem}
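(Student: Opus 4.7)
The plan is to argue by contradiction. Assume the subgraph induced on $U := \Gamma_{d-1}(\gamma) \cup \Gamma_d(\gamma)$ is disconnected, and write $U = C \sqcup C'$ as a split into a nonempty union of connected components of the induced subgraph and its complement in $U$; by construction no edge of $\Gamma$ joins $C$ to $C'$. Fix the $Q$-polynomial ordering $E_0, E_1, \dots, E_d$ of primitive idempotents and let $\theta^*_0 > \theta^*_1 > \dots > \theta^*_d$ be the dual eigenvalue sequence associated with $E := E_1$, so that $|X|\, E = \sum_{i=0}^{d} \theta^*_i A_i$; write $E^*_i = E^*_i(\gamma)$ for the dual idempotents and $\chi_C, \chi_{C'}$ for the characteristic vectors. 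Introduce the test vector
\begin{equation*}
v := |C'|\,\chi_C - |C|\,\chi_{C'},
\end{equation*}
which is supported on $U$ and satisfies $\mathbf{1}^{\top} v = 0$, equivalently $E_0 v = 0$.

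Now evaluate $v^{\top} E v$ in two ways. Since $E$ is positive semidefinite, $v^{\top} E v \geq 0$, with equality iff $Ev = 0$. On the other hand,
\begin{equation*}
|X|\, v^{\top} E v \;=\; \sum_{i=0}^{d} \theta^*_i \, v^{\top} A_i v,
\end{equation*}
and each $v^{\top} A_i v$ can be expressed through the joint distance distribution of the pair $(C,C')$. The disconnection hypothesis forces the cross-term at distance one to vanish ($\chi_C^{\top} A_1 \chi_{C'} = 0$), while the $i = 0$ contribution equals $\theta^*_0\,|C||C'|(|C|+|C'|) > 0$ and the $i = 1$ contribution is $\theta^*_1$ times a nonnegative quantity. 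To push this further I would bring in the remaining $Q$-polynomial content: using $E_j = q_j(E)$ and the Leonard-pair tridiagonal relations between $A$ and the dual matrix $A^* := \sum_i \theta^*_i E^*_i$, one can simultaneously control $v^{\top} E_j v$ for every $j$. A suitable combination of these inequalities, together with the strong support constraint $v = (E^*_{d-1} + E^*_d)v$, should force $Ev = 0$ and then $v = 0$, contradicting $C,C' \neq \emptyset$.

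The hardest step is the final extraction of this contradiction. Knowing only the sign pattern of $\theta^*_0, \dots, \theta^*_d$ is not sufficient, since the Cioab\u{a}--Koolen argument \cite{CK} already exploits this and only reaches connectivity of $\bigcup_{i \geq s}\Gamma_i(\gamma)$. Refining the conclusion to the two-layer set $\Gamma_{d-1}(\gamma) \cup \Gamma_d(\gamma)$ requires genuine $Q$-polynomial input, most plausibly via the three-term recurrence in the primary Terwilliger module spanned by the layer vectors $E^*_i \mathbf{1}$. I would compute the projection of $v$ onto this module using the layer inner products $\chi_C^{\top} E^*_{d-1}\mathbf{1}$ and $\chi_C^{\top} E^*_d \mathbf{1}$, and then use the explicit form of the recurrence, whose coefficients in the $Q$-polynomial case are expressible through the intersection numbers and dual eigenvalues, to compute $v^{\top} E v$ exactly and pin down its sign. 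An alternative, more combinatorial route would be to derive from the $Q$-polynomial hypothesis an intersection-number inequality involving $c_d, b_{d-1}, a_{d-1}, a_d$ that forces enough edges between any two candidate components of $U$ to merge them; the cases $d \in \{3,4\}$ resolved in \cite{CK} should indicate which inequality is the right one.
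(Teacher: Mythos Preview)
Your proposal is not a proof but an outline that you yourself flag as incomplete. You set up the test vector $v=|C'|\chi_C-|C|\chi_{C'}$ and the quadratic form $v^{\top}Ev\geq 0$, but you never extract a contradiction: the phrases ``a suitable combination of these inequalities \ldots\ should force $Ev=0$'', ``I would compute the projection'', and ``an alternative \ldots\ route would be to derive an intersection-number inequality'' are all promissory. The difficulty is real. The disconnection hypothesis only kills the cross term $\chi_C^{\top}A_1\chi_{C'}$; for $i\geq 2$ the cross terms $\chi_C^{\top}A_i\chi_{C'}$ are uncontrolled, and the dual eigenvalues $\theta^*_i$ take both signs, so there is no evident way to sign $v^{\top}Ev$. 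Projecting onto the primary $T$-module does not help either: that projection of $v$ is zero by construction (since $v$ is orthogonal to each layer vector $E^*_i\mathbf{1}$), so the three-term recurrence on the $E^*_i\mathbf{1}$ gives no information about $v$. Also, the assertion $\theta^*_0>\theta^*_1>\cdots>\theta^*_d$ is not part of the $Q$-polynomial hypothesis; only mutual distinctness is guaranteed.

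The paper's argument is entirely different and avoids global quadratic-form estimates. It uses Terwilliger's \emph{balanced set condition}, an exact vector identity valid for any $x,y$ at distance $h$ in a $Q$-polynomial graph:
\[
\sum_{z\in\Gamma_i(x)\cap\Gamma_j(y)}E\hat z-\sum_{w\in\Gamma_j(x)\cap\Gamma_i(y)}E\hat w
=p^h_{ij}\,\frac{\theta^*_i-\theta^*_j}{\theta^*_0-\theta^*_h}\,(E\hat x-E\hat y).
\]
From a putative component $C$ one builds the set $\Delta$ of vertices on geodesics from $\gamma$ to $C$, locates a border vertex $z\in\Delta$ at maximal depth $t$ (so $1\leq t\leq d-2$), and finds $x\in\Delta_{t+2}$ with $\partial(x,z)=2$ together with a neighbour $y\notin\Delta$ of $z$. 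One checks $\partial(x,y)=3$ and that $\Gamma_1(x)\cap\Gamma_2(y)\subset\Gamma_{t+1}(\gamma)$, $\Gamma_2(x)\cap\Gamma_1(y)\subset\Gamma_t(\gamma)$. Applying the identity with $(h,i,j)=(3,1,2)$ and pairing with $E\hat\gamma$ yields an equation in the $\theta^*$'s; doing the same with $y'\in\Gamma_{t-1}(\gamma)\cap\Gamma_1(z)$ yields a second equation, and comparing the two forces $\theta^*_\xi=\theta^*_{t-1}$ for some $\xi\in\{t,t+1\}$, contradicting the distinctness of the dual eigenvalues. The point is that an \emph{identity}, applied to a single well-chosen local configuration, replaces the inequality analysis you were attempting globally.
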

The main tool for our proof is Terwilliger's balanced set condition 
(see \cite{T,T2} or Theorem \ref{Ter} in the next section). 
This condition has been used   
by Lewis \cite{Lewis} to prove that the girth is at most 6 for any 
$Q$-polynomial distance-regular graph of 
valency at least $3$.

\section{Proof of the main result}

\hspace{1em} For a primitive idempotent $E$ of $\Ga$, there exist real numbers $\theta_0^{*},\theta_1^{*},\dots, \theta_d^{*}$ (called the dual eigenvalues of $\Ga$ with respect to $E$) such that 
\begin{equation}\label{dualeig}
E=|X|^{-1}\sum_{h=0}^{d}\theta_h^{*}A_h.
\end{equation}
We equip the vector space $\mathbb{R}^{X}$ with an inner
product such that 
$\langle u,v\rangle=u^{t}v$ for all $u,v\in \mathbb{R}^{X}$.
For $x\in X$, let $\hat{x}$ denote the vector in $\mathbb{R}^{X}$ with $x$-coordinate $1$ and all other coordinates $0$. Equation \eqref{dualeig} implies that 
\begin{equation}
\langle E\hat{x}, E\hat{y}\rangle=|X|^{-1}\theta_i^{*},
\end{equation}
where $i=\partial(x,y)$. The main tool for our proof is the following theorem.
\begin{theorem}[Terwilliger \cite{T,T2}]\label{Ter}
Let $\Gamma$ be a distance-regular graph with diameter $d\geq 3$,
and let $E$ denote a nontrivial primitive idempotent of $\Gamma$ with dual 
eigenvalues $\theta^*_0, \theta^*_1,\ldots, \theta^*_d$.
Then $\Ga$ is $Q$-polynomial with respect to $E$ if and only if $\theta_0^*\notin\{\theta_1^*,\dots,\theta_d^*\}$ and 
\begin{equation}\label{balset}
\sum_{z\in \Ga_i(x)\cap \Ga_j(y)}E\hat{z}-\sum_{w\in \Ga_j(x)\cap \Ga_i(y)}E\hat{w}=p^h_{ij}\frac{\theta_i^*-\theta_j^*}{\theta_0^*-\theta_h^*}(E\hat{x}-E\hat{y})
\end{equation}
for all integers $h,i,j$ with $1\leq h\leq d$ and $0\leq i,j\leq d$
and all vertices $x,y$ with $\partial(x,y)=h$. Furthermore, if the 
conditions above hold, then $\theta_0^*,\theta_1^*,\dots,\theta_d^*$ are 
mutually distinct. \end{theorem}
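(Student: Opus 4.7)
The statement is an equivalence together with a ``furthermore'' clause, so my plan is to prove the two implications separately and extract the mutual distinctness of the $\theta^*_i$ along the way. The unifying technical device is to reduce the vector identity \eqref{balset} in $E\mathbb{R}^X$ to a scalar identity among dual eigenvalues, by pairing both sides with the spanning vectors $E\hat u$ ($u\in X$) and using the key relation $\langle E\hat z,E\hat u\rangle=|X|^{-1}\theta^*_{\partial(z,u)}$.

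For the forward direction, I would start from $E_h=q_h(E_1)$ (entrywise), which, since the $(x,y)$-entry of $E_h$ depends only on $\partial(x,y)$, translates into the scalar recursion $\theta^*_h=|X|\,q_h(\theta^*_1/|X|)$. Linear independence of $E_0,\dots,E_d$ then forces the $\theta^*_i$ to be mutually distinct, settling the ``furthermore'' clause and in particular $\theta^*_0\notin\{\theta^*_1,\dots,\theta^*_d\}$. To deduce \eqref{balset}, I would pair both sides with $E\hat u$ for arbitrary $u\in X$, obtaining the scalar identity
\[
\sum_{z\in\Gamma_i(x)\cap\Gamma_j(y)}\theta^*_{\partial(z,u)}\;-\;\sum_{w\in\Gamma_j(x)\cap\Gamma_i(y)}\theta^*_{\partial(w,u)}\;=\;p^h_{ij}\,\frac{\theta^*_i-\theta^*_j}{\theta^*_0-\theta^*_h}\bigl(\theta^*_{\partial(x,u)}-\theta^*_{\partial(y,u)}\bigr).
\]
Substituting $\theta^*_r=|X|\,q_r(\theta^*_1/|X|)$ turns this into a polynomial identity in one variable; the Krein tridiagonality $q^r_{1j}=0$ for $|r-j|>1$, which is the algebraic content of the $Q$-polynomial property under the Hadamard product, then produces the required cancellation after antisymmetrization in $(i,j)$.

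For the reverse direction, I would specialize \eqref{balset} to $h=1$ and let $(i,j)$ range, then combine with suitable summations over pairs $x,y$ at fixed distance to extract scalar relations that force $q^r_{1j}=0$ whenever $|r-j|>1$. The hypothesis $\theta^*_0\notin\{\theta^*_1,\dots,\theta^*_d\}$ is exactly what guarantees that the coefficient $(\theta^*_0-\theta^*_h)^{-1}$ in \eqref{balset} makes sense for every $h\ge 1$, which is what allows the linear system to be inverted. This Krein tridiagonality is equivalent to the existence of a $Q$-polynomial ordering of the primitive idempotents with $E_1=E$, completing the proof.

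The main obstacle, as I see it, is the coefficient-matching step in the scalar reduction: producing (and, in the converse, recognizing) the precise factor $p^h_{ij}(\theta^*_i-\theta^*_j)/(\theta^*_0-\theta^*_h)$ requires careful bookkeeping of intersection numbers against dual eigenvalues. Antisymmetrization in $(i,j)$ is the crucial technical device, since the symmetric contributions cancel and leave exactly the off-tridiagonal Krein information that is relevant to the three-term recurrence.
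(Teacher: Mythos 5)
First, a point of comparison that matters here: the paper does not prove this statement at all --- Theorem \ref{Ter} is quoted from Terwilliger \cite{T,T2} and used as a black box, so there is no in-paper proof to measure yours against; it has to stand on its own. Judged that way, your proposal identifies several correct ingredients (the vectors $E\hat u$ span $E\mathbb{R}^X$, so it suffices to check inner products; the Krein condition $q^h_{1j}=0$ for $|h-j|>1$ is the algebraic form of the $Q$-polynomial property; antisymmetry in $(i,j)$ is essential), and your argument for the ``furthermore'' clause is sound: if $\theta^*_i=\theta^*_j$ with $i\neq j$, then each $E_h=q_h(E_1)$ takes equal values on pairs at distance $i$ and at distance $j$, hence so does every matrix in the Bose--Mesner algebra, contradicting $A_i\in M$. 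But the core of the forward implication is missing. Reducing \eqref{balset} to the scalar identity obtained by pairing with $E\hat u$ is legitimate, yet that scalar identity is exactly as hard as the original statement: the sum $\sum_{z\in\Gamma_i(x)\cap\Gamma_j(y)}\theta^*_{\partial(z,u)}$ depends on how the distances $\partial(z,u)$ are distributed over $\Gamma_i(x)\cap\Gamma_j(y)$, i.e.\ on triple intersection numbers of the configuration $(x,y,u)$, which are \emph{not} determined by the parameters of $\Gamma$. There is therefore no ``polynomial identity in one variable'' for your substitution to act on. The actual content of the theorem is that the $Q$-polynomial hypothesis forces the vector $\sum_{z\in\Gamma_i(x)\cap\Gamma_j(y)}E\hat z-\sum_{w\in\Gamma_j(x)\cap\Gamma_i(y)}E\hat w=E\bigl(A_i\hat x\circ A_j\hat y-A_j\hat x\circ A_i\hat y\bigr)$ to collapse onto the line spanned by $E\hat x-E\hat y$; the standard route expands $A_i=\sum_m P_{mi}E_m$, uses the fact that $E_1(E_m u\circ E_n v)=0$ whenever $q^1_{mn}=0$ to kill all terms with $|m-n|>1$, and then a further nontrivial computation produces the coefficient $p^h_{ij}(\theta^*_i-\theta^*_j)/(\theta^*_0-\theta^*_h)$. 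None of that work appears in your sketch.

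Two further problems. The formula $\theta^*_r=|X|\,q_r(\theta^*_1/|X|)$ is not correct as written: the $Q$-polynomial property says that the dual eigenvalue sequence of $E_r$ is $\bigl(|X|\,q_r(\theta^*_j/|X|)\bigr)_{j=0}^{d}$, i.e.\ $q_r$ is applied to the whole sequence attached to $E_1$; your formula instead equates an entry of the second eigenmatrix with the corresponding entry of its transpose, which fails in general. More seriously, the converse direction is asserted rather than argued: ``combine with suitable summations over pairs $x,y$ at fixed distance to extract scalar relations that force $q^r_{1j}=0$'' names a goal, not a proof, and a genuine $Q$-polynomial ordering also requires the nondegeneracy $q^h_{1,h\pm1}\neq 0$, which your outline never addresses. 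As it stands the proposal is a strategy outline with the decisive steps left unproved.
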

The equation \eqref{balset} is usually called the balanced set condition. We are now ready to give the proof of Theorem \ref{main}.
\begin{proof}[Proof of Theorem \ref{main}] 
Let $E$ be a primitive idempotent of $\Gamma$ with respect to which
$\Gamma$ is $Q$-polynomial.
We will use a proof by contradiction,
and assume that there exists $\gamma \in X$ such that
the subgraph induced on
$\Gamma_{d-1}(\gamma)\cup \Ga_{d}(\gamma)$ is disconnected.
Let $C$ be the vertex set of a connected
component of the subgraph induced on $\Ga_{d-1}(\gamma)\cup \Ga_d(\gamma)$. 
Let the set $\Delta$ consist of the vertices in $X$
that lie on a geodesic from $\gamma$ to $C$. 
The set $\Delta$ is properly contained in $X$ since
$C\neq \Gamma_{d-1}(\gamma)\cup \Gamma_d(\gamma)$.
We partition $\Delta = \cup_{j=0}^d \Delta_j$ where 
 $\Delta_j = \Delta\cap \Gamma_{j}(\gamma)$
for $0 \leq j \leq d$. Note that for 
 $0 \leq j\leq d-1$, each vertex in $\Delta_j$ has at least one
neighbor in $\Delta_{j+1}$.

A vertex in $\Delta$ will be called a {\it border} whenever it
is adjacent to a vertex in 
$X \setminus \Delta$.
Since $\Delta \not=X$ and $\Gamma$ is connected,
$\Delta$ contains at least one border vertex.
Let $t$ denote the maximal integer $j$ $(0 \leq j \leq d)$
such that $\Delta_j$ contains a border vertex.
By the construction $1 \leq t \leq d-2$.

Pick a border vertex $z \in \Delta_t$.
There exists $x\in \Delta_{t+2}$ such that
$\partial(x,z)=2$. Let $y\in X\setminus \Delta$ be a neighbor of $z$.
Define $\xi = \partial (\gamma, y)$.
By the triangle inequality $\xi \in \lbrace t-1,t,t+1\rbrace$.
Note that $\xi \not=t-1$;
otherwise $y$ is on a geodesic from $\gamma$ to $C$
passing through $z$, forcing $y \in \Delta$ for a contradiction.
Therefore $\xi = t$ or $\xi=t+1$. 


We next show that $\partial(x,y)=3$.
Because $\partial(x,z)=2$ and $\partial(z,y)=1$, the triangle inequality
implies that $\partial(x,y)\leq 3$.
By the maximality of $t$ and since $x \in \Delta_{t+2}$,
we see that $x$ is not a border and not adjacent to a border. Therefore $\Delta$ contains all
the vertices of $\Ga$ that are at distance at most $2$ from $x$.
The vertex $y$ is not in $\Delta$, so $\partial(x,y)\geq 3$. We have shown that $\partial(x,y)=3$.

%

Note that  $\Ga_1(x)\cap \Ga_2(y)\subset \Ga_{t+1}(\gamma)$ and 
$\Ga_2(x)\cap \Ga_1(y)\subset \Ga_{t}(\gamma)$. We apply the balanced set condition \eqref{balset} to $x$ and $y$
using $h=3,i=1,j=2$
and then take the inner product of each side
with $E\hat{\gamma}$; this gives
\begin{align}\label{eq1case1}
p^{3}_{12}(\theta^*_{t+1}-\theta^{*}_{t})=p^3_{12}\frac{\theta^{*}_1-\theta^{*}_2}{\theta^{*}_0-\theta^{*}_3}(\theta^*_{t+2}-\theta^{*}_{\xi}).
\end{align}
There exists $y'\in \Ga_{t-1}(\gamma)\cap \Ga_1(z)$. 
We have $\partial(x,y')=3$ and 
$\Ga_1(x)\cap \Ga_2(y')\subset \Ga_{t+1}(\gamma)$
and $\Ga_2(x)\cap \Ga_1(y')\subset \Ga_{t}(\gamma)$.
We apply the balanced set condition
\eqref{balset} to $x$ and $y'$ using $h=3,i=1,j=2$ and then
take the inner product of each side with $E\hat{\gamma}$; this gives
\begin{equation}\label{eq2case2}
p^{3}_{12}(\theta^{*}_{t+1}-\theta^{*}_{t})=p^3_{12}\frac{\theta^{*}_1-\theta^{*}_2}{\theta^{*}_0-\theta^{*}_3}(\theta^{*}_{t+2}-\theta^{*}_{t-1}).
\end{equation}
Comparing 
\eqref{eq1case1} and \eqref{eq2case2} we obtain
$\theta^*_{\xi}=\theta^*_{t-1}$. We have $\xi=t-1$
since $\theta^*_0, \theta^*_1,\ldots, \theta^*_d$
are mutually distinct. We mentioned earlier that $\xi\not=t-1$,
for a contradiction.
We conclude that the subgraph induced on
$\Gamma_{d-1}(\gamma)\cup \Gamma_d(\gamma)$ 
is connected.
\end{proof}

To see how Theorem \ref{main} is best possible, assume that $\Ga$ is the 
Odd graph $O_{d+1}$ with $d\geq 3$. Recall that the 
vertices of $\Ga$ are the $d$-subsets of a set $\Omega$ of size $2d+1$. 
Two vertices $\alpha$ and $\beta$ are adjacent whenever
$\alpha\cap \beta=\emptyset$. The diameter of $\Ga$ is $d$ and 
its intersection numbers are known (see \cite{Biggs} or
\cite[Prop 9.1.7]{BCN}). For $0\leq h\leq d$, we have $p^{h}_{1h}=0$ 
if $h<d$ and $p^{h}_{1h}=\lceil \frac{d+1}{2}\rceil$ if $h=d$. 
So with respect to any vertex of $\Ga$, the $h$-th subconstituent has no 
edges if $h<d$ and is regular with valency $\lceil \frac{d+1}{2}\rceil$
if $h=d$. 
\begin{lemma}
Assume that $\Ga$ is the Odd graph $O_{d+1}$ with $d\geq 3$. For
any $\gamma\in X$, the number of connected components in the
$d$-th subconstituent of $\Ga$ with respect to $\gamma$ is 
equal to $\binom{2m}{m}/2$, 
where $m=d/2$ if $d$ is even and $m=(d+1)/2$ if $d$ is odd. Moreover, this $d$-th subconstituent is not connected.
\end{lemma}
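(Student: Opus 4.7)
The plan is to exhibit an explicit invariant on $\Gamma_d(\gamma)$ that is constant on every edge, so that its level sets refine the partition into connected components, and then to verify that each level set is in fact connected in the induced subgraph.

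First I would recall the distance formula for $O_{d+1}$: two $d$-subsets $\alpha,\beta$ of $\Omega$ satisfy $\partial(\alpha,\beta)=2j$ iff $|\alpha\cap\beta|=d-j$, and $\partial(\alpha,\beta)=2j+1$ iff $|\alpha\cap\beta|=j$. The analysis then splits by the parity of $d$. In the even case $d=2m$, every $\alpha\in\Gamma_d(\gamma)$ satisfies $|\alpha\cap\gamma|=m$, and if $\alpha\sim\beta$ in $\Gamma_d(\gamma)$ then $\alpha\cap\gamma$ and $\beta\cap\gamma$ are two disjoint $m$-subsets of the $2m$-set $\gamma$, hence partition $\gamma$. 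Therefore the unordered pair $\{\alpha\cap\gamma,\gamma\setminus\alpha\}$ is an invariant of the connected component of $\alpha$. The level set over a fixed pair $\{A,\gamma\setminus A\}$ consists of the vertices $X\sqcup Y$ with $X\in\{A,\gamma\setminus A\}$ and $Y$ any $m$-subset of the $(2m+1)$-set $\Omega\setminus\gamma$, and one checks directly that the induced subgraph on this level set is the bipartite double cover of the Odd graph $O_{m+1}$ on $\Omega\setminus\gamma$. The odd case $d=2m-1$ proceeds identically with the roles of $\gamma$ and $\Omega\setminus\gamma$ reversed: the invariant is $\{\alpha\setminus\gamma,(\Omega\setminus\gamma)\setminus\alpha\}$, and each level set is the bipartite double cover of $O_m$ on $\gamma$. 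In both cases the level sets are in bijection with unordered partitions of a $2m$-set into two $m$-subsets, of which there are $\binom{2m}{m}/2$.

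To finish, I would invoke the standard fact that the bipartite double cover of a graph is connected iff the graph itself is connected and non-bipartite. The Odd graphs are connected by construction. For non-bipartiteness, the sequence $v_i=\{i,i+1,\ldots,i+n-2\}\pmod{2n-1}$ visited with step $n-1$ in $O_n$ traces out a cycle of odd length $2n-1$ (using $\gcd(n-1,2n-1)=1$). The hypothesis $d\geq 3$ ensures $m\geq 2$, which is exactly enough for the relevant Odd graph ($O_m$ or $O_{m+1}$) to be non-bipartite. Hence every level set is connected, yielding exactly $\binom{2m}{m}/2$ components. The disconnectedness conclusion of the lemma is then immediate, since $\binom{2m}{m}/2\geq 3$ whenever $m\geq 2$.

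The main obstacle is the fiber-connectivity step. Most of the argument is routine bookkeeping with intersection sizes; what takes a little care is to \emph{recognize} that the induced subgraph on each fiber is the bipartite double cover of an Odd graph, after which the folklore characterization of connectedness of such covers finishes the job cleanly.
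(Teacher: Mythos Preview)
Your argument is correct and follows essentially the same route as the paper: both identify each connected component of $\Gamma_d(\gamma)$ with the bipartite double of a smaller Odd graph ($O_{m+1}$ when $d$ is even, $O_m$ when $d$ is odd) and then count. The only difference is that the paper outsources the structural identification to Biggs's results and finishes with a cardinality division $|\Gamma_d(\gamma)|/\bigl(2\binom{2r+1}{r}\bigr)$, whereas you construct the edge-invariant $\{\alpha\cap\gamma,\gamma\setminus\alpha\}$ (resp.\ $\{\alpha\setminus\gamma,(\Omega\setminus\gamma)\setminus\alpha\}$) explicitly and read off both the component structure and the count $\binom{2m}{m}/2$ directly; your version is more self-contained, the paper's is terser.
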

\begin{proof} From the intersection numbers of $\Gamma$
we obtain
 $\vert \Gamma_d(\gamma)\vert=\binom{d}{m}\binom{d+1}{m}$. 
Using the results of Biggs
\cite{Biggs},
each connected component of $\Gamma_d(\gamma)$ is
isomorphic to the bipartite double (see \cite[Section 1.11]{BCN})
of $O_{r+1}$, where $r=d/2$ if $d$ is even
and $r=(d-1)/2$ if $d$ is odd. This bipartite double has 
 $2 \binom{2r+1}{r}$ vertices. The result follows after some routine algebra. Note that the lemma also follows by observing that $\Gamma_d(\gamma)$ consists of the vertices at distance $m$ from $\gamma$ in the Johnson graph $J(2d+1,d)$.
\end{proof}
Note also that for $O_{d+1}$ the subgraph induced on 
$\Ga_{1}(\gamma)\cup \Ga_2(\gamma)$ is disconnected. 
Next assume that $\Gamma$ is the folded $(2d+1)$-cube.
It has diameter $d$ and for $1\leq h\leq d-1$,
the $h$-subconstituent of $\Ga$ with respect to any vertex has no edges
(see \cite[p.~264]{BCN}), and consequently not connected.
Gardiner, Godsil, Hensel and Royle \cite{GGHR} proved that the diameter of 
the second subconstituent of a primitive strongly-regular graph is at 
most three. It would be interesting to extend this result
to distance-regular graphs with diameter $d\geq 3$. For example, if
$\Ga$ is a distance-regular with  $d=3$,
then what is the diameter of $\Ga_3(\gamma)$ 
when $\Ga_3(\gamma)$ is connected ?
Another related problem from \cite{CK} is to classify the distance-regular graphs $\Ga$ of diameter $3$
such that $\Ga_3(\gamma)$ is disconnected for some vertex $\gamma$.
See \cite{KP} for related results.

The vertex-connectivity of a primitive distance-regular graph 
is equal to its valency,
as proved by Brouwer and Mesner \cite{BM} for diameter $d=2$,
and by Brouwer and Koolen \cite{BK} for $d\geq 3$. 
Brouwer and Haemers \cite[p.~127]{BH} observed that for
certain strongly-regular graphs constructed by Haemers \cite[p.~76]{H} 
the vertex-connectivity of their 
second subconstituent is strictly less than the valency. 
It would be interesting to determine lower bounds for the
vertex-connectivity and edge-connectivity of 
the subconstituents for a distance-regular graph with $d\geq 3$.
See \cite{B,CKK,CKL,CKL2,G,KM} for related connectivity results
concerning distance-regular graphs and association schemes.

\section*{Acknowledgments} The authors thank the referees for useful comments and suggestions. The research of the first author was
supported by the grants NSF DMS-1600768, CIF-1815922, and a JSPS
Invitational Fellowship for Research in Japan (Short-term S19016).
The research of the second author is partially supported by the
National Natural Science Foundation of China (Grant No. 11471009 and
Grant No. 11671376) and by Anhui Initiative in Quantum Information 
Technologies (Grant No. AHY150200). 
Part of this work was done while the authors were visiting Anhui University, Hefei, China. We 
thank Yi-Zheng Fan, Tatsuro Ito and their students for wonderful hospitality.


\begin{thebibliography}{99}

\bibitem {BI} E. Bannai and T. Ito, {\em Algebraic Combinatorics, I.} Association schemes. The Benjamin/Cummings Publishing Co., Inc., Menlo Park, CA, 1984.

\bibitem {Biggs} N. Biggs, Some odd graph theory, {\em Second International Conference on Combinatorial Mathematics (New York, 1978), pp. 71--81, Ann. New York Acad. Sci., 319}, New York Acad. Sci., New York, 1979.

\bibitem {B} A.E. Brouwer, Connectivity and spectrum of graphs, {\em CWI Quarterly} {\bf 9} no. 1 \& 2 (1996), 37--40.

\bibitem {Br} A.E. Brouwer, Private communication to the first two authors during GAC5 Conference, Oisterwijk, The Netherlands,
August 2011.

\bibitem {BCN} A.E. Brouwer, A. Cohen and A. Neumaier, {\em Distance Regular-Graphs}, Springer-Verlag, Berlin 1989.

\bibitem {BH} A.E. Brouwer and W.H. Haemers, {\em Spectra of Graphs}, Springer 2012.

\bibitem {BK} A.E. Brouwer and J.H. Koolen, The vertex-connectivity of a distance-regular graph, {\em European J. Combin.} {\bf 30} (2009), no. 3, 668--673.

\bibitem {BM} A.E. Brouwer and D.M. Mesner, The connectivity for strongly-regular graphs, {\em European J. Combin.} {\bf 6} (1985), no. 3, 215--216. 

\bibitem {CGS} P.J. Cameron, J.-M. Goethals and J.J. Seidel, Strongly regular graphs having strongly-regular subconstituents,
{\em J. Algebra} {\bf 55} (1978), no. 2, 257--280. 

\bibitem{CK} S.M. Cioab\u{a} and J.H. Koolen, On the connectedness of the complement of a ball in distance-regular graphs, {\em J. Algebraic Combin.} {\bf 38} (2013), 191-195.

\bibitem {CKK} S.M. Cioab\u{a}, J.H. Koolen and K. Kim, On a conjecture of Brouwer involving the connectivity of strongly-regular graphs,
{\em J. Combin. Theory, Ser. A} {\bf 119} (2012), 904--922. 

\bibitem {CKL} S.M. Cioab\u{a}, J.H. Koolen and W. Li, Disconnecting 
strongly-regular graphs, {\em European J. Combin.}  {\bf 38} (2014), 1--11.

\bibitem {CKL2} S.M. Cioab\u{a}, J.H. Koolen and W. Li, Max-cut and extendability of matchings in distance-regular graphs,
{\em European J. Combin.} {\bf 62} (2017), 232--244.

\bibitem {DKT} E.R. van Dam, J.H. Koolen and H. Tanaka, Distance-regular graphs, {\em Electron. J. Combin.} DS22.

\bibitem {G} C. Godsil, Equiarboreal graphs, {\em Combinatorica} {\bf 1} (1981), no. 2, 163--167. 

\bibitem {GGHR} A.D. Gardiner, C.D. Godsil, A.D. Hensel and G.F. Royle, Second neighbourhoods of strongly-regular graphs,
{\em Discrete Math.} {\bf 103} (1992), 161--170. 

\bibitem {H} W.H. Haemers, {\em Eigenvalue techniques in design and graph theory}, Reidel, Dordrecht, 1980.
Thesis (T.H. Eindhoven, 1979) = Math. Centr. Tract 121 (Amsterdam, 1980).

\bibitem {KM} B. Kodalen and W.J. Martin, On the connectivity of graphs in association schemes, {\em Electron. J. Combin.} {\bf 24} (2017), no. 4, Paper 4.39, 17 pp.

\bibitem {KP} J.H. Koolen and J. Park, Shilla distance-regular graphs, {\em European J. Combin.} {\bf 31} (2010), 2064--2073.

\bibitem {Lewis} H. Lewis, Homotopy in $Q$-polynomial distance-regular graphs, {\em Discrete Math.} {\bf 223} (2000), 189--206.

\bibitem {T} P. Terwilliger, A characterization of $P$- and $Q$-polynomial association schemes, {\em J. Combin. Theory, Ser. A} {\bf 45} (1987), 8--26.

\bibitem{T2} P. Terwilliger, A new inequality for distance-regular graphs, {\em Discrete Math.} {\bf 137} (1995), 319--332.


\end{thebibliography}
\end{document}